\numberwithin{equation}{section}
\newcommand{\be}{\begin{equation}}
\newcommand{\ee}{\end{equation}}
\newcommand{\ba}{\begin{array}}
\newcommand{\ea}{\end{array}}
\newcommand{\bea}{\begin{eqnarray*}}
\newcommand{\eea}{\end{eqnarray*}}
\newcommand{\bean}{\begin{eqnarray}}
\newcommand{\eean}{\end{eqnarray}}
\newcommand{\Del}{\nabla}
\newtheorem{theorem}{Theorem}[section]
\newcommand{\lc}{\mathrel{\raise2pt\hbox{${\mathop<\limits_{\raise1pt\hbox{\mbox{$\sim$}}}}$}}}
\newcommand{\gc}{\mathrel{\raise2pt\hbox{${\mathop>\limits_{\raise1pt\hbox{\mbox{$\sim$}}}}$}}}
\newcommand{\ec}{\mathrel{\raise1pt\hbox{${\mathop=\limits_{\raise2pt\hbox{\mbox{$\sim$}}}}$}}}
\begin{document}

\title{Revisit of Semi-Implicit Schemes for Phase-Field Equations}

\author[1,2]{Tao Tang}
\affil[1]{\small Division of Science and Technology, BNU-HKBU United International College, Zhuhai, Guangdong, China }
\affil[2]{\small SUSTech International Center for Mathematics, Southern University of Science and Technology, Shenzhen, China (\href{mailto:tangt@sustech.edu.cn}{tangt@sustech.edu.cn})}
\maketitle

\begin{abstract}
It is a very common practice to use semi-implicit schemes in various computations, which treat selected linear terms implicitly and the nonlinear terms explicitly. For phase-field equations, the principal elliptic operator is treated implicitly to reduce the associated stability constraints while the nonlinear terms are still treated explicitly to avoid the expensive process of solving nonlinear equations at each time step. However, very few recent numerical analysis is relevant to semi-implicit schemes, while "stabilized" schemes have become very popular. In this work, we will consider semi-implicit schemes for the Allen-Cahn equation with {\em general potential} function. It will be demonstrated that the maximum principle is valid and the energy stability also holds for the numerical solutions. This paper extends the result of Tang \& Yang (J. Comput. Math., 34(5):471--481, 2016)  which studies the semi-implicit scheme for the Allen-Cahn equation with {\em polynomial potentials}.
\end{abstract}

{\bf Keywords.} semi-implicit, phased-field equation, energy dissipation, maximum principle

{\bf AMS subject classifications: }
65M06, 65M12

{\bf Contribute to the special issue devoted to Professor Weiyi Su}

\section{Introduction}

There has been tremendous interests in developing energy-dispassion numerical methods for phase-field models starting from earlier numerical works \cite{chenshen98,eyre1998unconditionally,xu2006stability}. To make sure a numerical scheme satisfies the nonlinear energy stability, there are basically three class of approaches. For ease of exposition, we consider the simplest phase-field model, i.e., the Allen-Cahn equation with initial condition:
\begin{eqnarray}
  & \frac{\partial \phi}{\partial t} = \varepsilon^2 \Delta \phi - f(\phi), \quad &\mbox{in } \Omega\times (0,T], \label{AC0} \\
  & \phi(x, 0) = \phi_0(x), \quad & \mbox{in } \Omega, \label{AC0a}
\end{eqnarray}
where $\varepsilon>0$ is the interface width parameter, $f(\phi) = F'(\phi)$,  where $F$ is a smooth function.
The corresponding energy is defined as
\begin{equation}
  \label{energy}
  E(\phi)
  :=
  \int_{\Omega} \Bigl(\frac{\varepsilon^2}2 |\nabla \phi|^2 +  F(\phi)\Bigr)  dx.
\end{equation}
Energy stability means that
\be
E(\phi(\cdot, t)) \le E(\phi(\cdot, s)), \quad \forall t> s.
\ee
The first class of energy stable scheme is the Eyre's convex splitting method \cite{eyre1998unconditionally}, which yields a {\em nonlinear} semi-implicit scheme:
\be\label{eyre}
\frac{\phi^{n+1}- \phi^n}{\Delta t}
 =  \varepsilon^2\Delta \phi^{n+1} - T_1(\phi^{n+1}) - T_2(\phi^{n}),
\ee
where $T_1$ and $T_2$ are some convex functionals satisfying $T_1+T_2= f$. This is also referred as partially implicit scheme for phase-field modeling by \cite{xu2019stability}.

The second class of energy stable scheme is to add some extra terms so that the resulting scheme satisfies energy non-increasing property; these schemes are called "stabilized" approach. There have been quite large size of papers in this direction in the past 15 years, see the review articles \cite{shen2019new,tangICM2018} and references therein.

The third class is the direct fully implicit scheme, see, e.g., \cite{Du1991,LTZ20,xu2019stability}. In particular, it is demonstrated in \cite{xu2019stability} that a first-order fully implicit scheme for the Allen-Cahn model can be devised so that the maximum principle is valid on the discrete level and, furthermore, the linearized discretized system can be effectively preconditioned using discrete Poisson operators.

It is noted by  \cite{eyre1998unconditionally} that an unconditionally energy-stable scheme, such that backward Euler scheme, is not necessarily better than a conditionally energy stable scheme when the time step size is not small enough. In other words, if larger time steps are needed then the first and second classes schemes are useful. However, it is argued in \cite{xu2019stability} that most implicit schemes are energy-stable if the time-step size is sufficiently small. Moreover, it is noted that a convex splitting scheme or a "stabilized" approach can be equivalent to some fully implicit scheme with a different time scaling and thus it may lack numerical accuracy.

It is obvious that the partially implicit scheme and the fully implicit scheme all require some iteration techniques, and are less effective than the explicit scheme or semi-implicit scheme.
In this work, we wish to study the semi-implicit energy-stable scheme for the Allen-Cahn equation:
\be\label{semi1}
\frac{\phi_j^{n+1}- \phi_j^n}{\Delta t}
=  \varepsilon^2\Del_+ \Del_- \phi^{n+1}_j -  f(\phi^n_j), \quad 1\le j \le J,
\ee
where $\Delta t$ and $\Delta x$ are time step and mesh size in space, respectively, and
\[
\Del _+ u_j = \frac{u_{j+1}-u_j}{\Delta x}, \quad
\Del _- u_j = \frac{u_{j}-u_{j-1}}{\Delta x}.
\]
It will be demonstrated that the maximum principle is valid for \eqref{semi1}, and the energy stability also holds for the numerical solutions.

The paper is organized as follows. In Section \ref{sect2}, we will prove the maximum principe and the $L^1$-stability for the numerical solutions of semi-implicit scheme \eqref{semi1}.  The energy stability will be established in Section \ref{sect3}.  Some possible extensions will be discussed in the final section.

\section{Maximum principle}
\label{sect2}

It is known that a maximum principle is satisfied for the Allen-Cahn equation \eqref{AC0}, see, e.g.,  \cite{Trudinger2015,tang2016implicit}. Below we will provide a discrete counterpart using a
{\em monotone scheme} arguments.

\begin{theorem} \label{thm1}
Consider the semi-implicit scheme \eqref{semi1} with periodic boundary conditions. Assume $\gamma_+ >\gamma_-$ are two constants.
\begin{itemize}
\item
If the nontrival function $f\in C^1(\gamma_1, \gamma_+)$  satisfies
\be \label{2e1}
 f(\gamma_-) =f(\gamma_+)=0, \quad \gamma_- \le \phi_0 \le \gamma_+,
 \ee
 \item
 and if the time step $\Delta t$ satisfies
 \be\label{2e2}
 \Delta t \max_{\gamma_- \le u \le \gamma_+}  f'(u) \le 1,
 \ee
 \end{itemize}
 then
 \be \label{max1}
  \gamma_- \le  \phi_j  ^n \le \gamma_+, \quad \forall \;\; 1\le j \le J, \; n\ge 0.
  \ee
\end{theorem}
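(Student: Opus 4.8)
The plan is to prove the bound \eqref{max1} by induction on $n$, using the classical observation that the implicit discrete Laplacian gives a discrete maximum principle. Suppose $\gamma_- \le \phi^n_j \le \gamma_+$ for all $j$; we must deduce the same for $\phi^{n+1}$. Rewrite the scheme \eqref{semi1} as
\[
\phi^{n+1}_j - \Delta t\,\varepsilon^2 \Del_+\Del_- \phi^{n+1}_j
= \phi^n_j - \Delta t\, f(\phi^n_j) =: g(\phi^n_j),
\]
so that we want to control the right-hand side and then invert the operator $I - \Delta t\,\varepsilon^2 \Del_+\Del_-$ while preserving the sup and inf bounds.

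The first key step is to show the explicit update map $g(u) = u - \Delta t\, f(u)$ maps $[\gamma_-,\gamma_+]$ into $[\gamma_-,\gamma_+]$. Since $f(\gamma_-) = f(\gamma_+) = 0$, we have $g(\gamma_-) = \gamma_-$ and $g(\gamma_+) = \gamma_+$. By the time-step constraint \eqref{2e2}, $g'(u) = 1 - \Delta t\, f'(u) \ge 0$ on $(\gamma_-,\gamma_+)$, so $g$ is nondecreasing there; combined with the endpoint values this gives $\gamma_- \le g(u) \le \gamma_+$ for all $u\in[\gamma_-,\gamma_+]$. Applying this pointwise with $u = \phi^n_j$ shows $\gamma_- \le g(\phi^n_j) \le \gamma_+$ for every $j$. (One should note that $g'$ might be slightly negative at the boundary if $f'$ is only one-sidedly controlled, but the hypothesis gives the max over the closed interval, so this is clean; the nontriviality of $f$ and the $C^1$ assumption — modulo the apparent typo $\gamma_1$ for $\gamma_-$ — are only needed to make $f'$ bounded.)

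The second key step is the discrete maximum principle for the implicit solve. Let $\phi^{n+1}$ be the solution of $(I - \Delta t\,\varepsilon^2 \Del_+\Del_-)\phi^{n+1} = g(\phi^n)$ with periodic boundary conditions. At an index $j^\star$ where $\phi^{n+1}$ attains its maximum, $\Del_+\Del_-\phi^{n+1}_{j^\star} = (\phi^{n+1}_{j^\star+1} - 2\phi^{n+1}_{j^\star} + \phi^{n+1}_{j^\star-1})/\Delta x^2 \le 0$, hence
\[
\max_j \phi^{n+1}_j = \phi^{n+1}_{j^\star}
\le \phi^{n+1}_{j^\star} - \Delta t\,\varepsilon^2 \Del_+\Del_-\phi^{n+1}_{j^\star}
= g(\phi^n_{j^\star}) \le \gamma_+ .
\]
Symmetrically, evaluating at the minimizing index gives $\min_j \phi^{n+1}_j \ge \gamma_-$. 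Existence and uniqueness of $\phi^{n+1}$ follows since $I - \Delta t\,\varepsilon^2 \Del_+\Del_-$ is a symmetric, strictly diagonally dominant (hence invertible) matrix under periodic boundary conditions. Chaining the induction from the base case $\gamma_- \le \phi_0 \le \gamma_+$ in \eqref{2e1} completes the proof.

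I expect the main obstacle to be purely expository rather than mathematical: one must be careful that the maximum-principle argument for the implicit operator is stated correctly for the periodic discrete Laplacian (the extremum-index argument works verbatim, but it needs the comparison at a genuine interior extremum, which periodicity guarantees since there is no boundary), and one must handle the regularity of $f$ honestly — $f \in C^1$ on the open interval with a bounded derivative up to the closure — so that $\max_{\gamma_-\le u\le \gamma_+} f'(u)$ in \eqref{2e2} is meaningful. If instead one wanted to follow the "monotone scheme" framing the paper advertises, the alternative is to write the full update $\phi^{n+1} = H(\phi^n)$ after inverting the implicit operator and check that $H$ is a monotone (order-preserving) map with $H(\gamma_-\mathbf{1}) = \gamma_-\mathbf{1}$ and $H(\gamma_+\mathbf{1}) = \gamma_+\mathbf{1}$; monotonicity of the explicit part is exactly the step-size condition \eqref{2e2}, and monotonicity of the implicit solve is the statement that $(I - \Delta t\,\varepsilon^2\Del_+\Del_-)^{-1}$ has nonnegative entries, which is the discrete analogue of the resolvent positivity of the Laplacian. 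Either route is short; I would present the extremum-index version as the cleanest.
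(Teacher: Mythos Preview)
Your proof is correct and uses the same ingredients as the paper's: induction on $n$, the time-step condition \eqref{2e2} to make $u\mapsto u-\Delta t\,f(u)$ nondecreasing on $[\gamma_-,\gamma_+]$, and a discrete maximum principle for the implicit Laplacian. The only difference is packaging: the paper folds everything into a single monotone function $G(\phi^n_j,\phi^{n+1}_{j+1},\phi^{n+1}_{j-1})$ and bounds $(1+2\lambda)\max_j\phi^{n+1}_j\le\gamma_++2\lambda\max_j\phi^{n+1}_j$ directly, whereas you split the step into ``explicit map preserves $[\gamma_-,\gamma_+]$'' followed by ``implicit solve preserves $[\gamma_-,\gamma_+]$ via the extremum-index argument''---which is exactly the alternative framing you describe at the end of your proposal.
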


\begin{proof}
We use mathematical induction to prove the result. The result is obvious at $n=0$. Assume \eqref{max1} is true at level $n$. It follows from \eqref{semi1} that
\[
\phi^{n+1}_j = \phi^n_j + \lambda \left( \phi^{n+1}_{j+1} - 2\phi^{n+1}_j +\phi^{n+1}_{j-1} \right)
-\Delta t f(\phi^{n}_j) .
\]
where $\lambda := \epsilon ^2 \Delta t / \Delta x^2$. Consequently, we have
\begin{eqnarray}
(1+ 2\lambda) \phi^{n+1}_j &=& \phi^{n}_j+ \lambda \phi^{n+1}_{j+1} + \lambda \phi^{n+1}_{j-1}- \Delta t f(\phi^{n}_j) \nonumber \\
&=:& G(\phi^{n}_j, \phi^{n+1}_{j+1}, \phi^{n+1}_{j-1}) .
\end{eqnarray}
Note that
\[
\frac{\partial G}{\partial \phi^{n}_j} = 1- \Delta t f'(\phi^{n}_j) \ge 0, \quad
\frac{\partial G}{\partial \phi^{n+1}_{j+1}} = \lambda >0, \quad \frac{\partial G}{\partial \phi^{n+1}_{j-1}} =\lambda >0,
\]
where in the first inequality we have used the assumption \eqref{2e2} and the
induction assumption at $n$. The above result shows that $G$
is a monotone scheme, which yields
\begin{eqnarray}
&& (1+ 2\lambda) \phi^{n+1}_j =G(\phi^{n}_j, \phi^{n+1}_{j+1}, \phi^{n+1}_{j-1}) \nonumber \\
&\le& G( \gamma_+, \max_j \phi_j^{n+1}, \max_j  \phi^{n+1}_j)
= \gamma_+ + 2\lambda \max_j \phi^{n+1}_j,
\end{eqnarray}
where we have used the assumption that $f(\gamma_+)=0$. As the above result is true for all
$j$, we obtain
\[
(1+ 2\lambda) \max _j \phi^{n+1}_j \le \gamma_+ + 2\lambda \max_j \phi^{n+1}_j,
\]
which gives $\max _j \phi^{n+1}_j \le \gamma_+$.
Similarly, we have
\begin{eqnarray}
&& (1+ 2\lambda) \phi^{n+1}_j =G(\phi^{n}_j, \phi^{n+1}_{j+1}, \phi^{n+1}_{j-1})\nonumber \\
&\ge& G( \gamma_-, \min_j \phi_j^{n+1}, \min_j  \phi^{n+1}_j)
= \gamma_- + 2\lambda \min_j \phi_j^{n+1},
\end{eqnarray}
where we have used the assumption that $f(\gamma_-)=0$. As the above result is true for all
$j$, we obtain $\min _j \phi^{n+1}_j \ge \gamma_-$. This completes the proof of the theorem.
\end{proof}

Before closing this section, we remark that the $L^1$-stability also holds for the Allen-Cahn equation \eqref{AC0}. More precisely, if the conditions \eqref{2e1} and \eqref{2e2} in Theorem \ref{thm1} are satisfied, and also $f(0)=0$ with $\gamma_-<0< \gamma_+$, then the numerical solutions
of \eqref{semi1} satisfy
\be \label{3e0a}
 \Vert \phi^{n+1} \Vert _1  \le e^{L\Delta t} \Vert \phi^n \Vert_1,
 \ee
 where $L=-\min_{\gamma_- \le u \le \gamma_+} f'(u)$, and 
\be \label{l1norm}
\Vert \phi^n \Vert_1 = \sum ^J_{j=1} \vert \phi_j^n \vert \Delta x.
\ee
We briefly outline the proof of \eqref{3e0a}. 
It follows from the semi-implicit scheme \eqref{semi1} that
\be \label{3e1}
(1+ 2\lambda) \phi^{n+1}_j = (1- \Delta t f'(\theta^n_j) )  \phi^{n}_j+ \lambda \phi^{n+1}_{j+1} + \lambda \phi^{n+1}_{j-1},
\ee
where we have used the assumption $f(0)=0$ and $\theta ^n_j$ is between $0$ and $\phi ^n_j$.
Furthermore, it follows from \eqref{2e2} that the coefficient of $\phi^{n}_j$ is non-negative. Consequently, we have
\eqref{3e1} that
\begin{eqnarray} \label{3e2}
(1+ 2\lambda) \vert \phi^{n+1}_j \vert &\le& (1- \Delta t f'(\theta^n_j) ) \vert \phi^{n}_j \vert+ \lambda \vert \phi^{n+1}_{j+1}\vert  + \lambda \vert \phi^{n+1}_{j-1}\vert \nonumber \\
&\le& (1+ \Delta t  L)\vert \phi^{n}_j \vert+ \lambda \vert \phi^{n+1}_{j+1}\vert  + \lambda \vert \phi^{n+1}_{j-1}\vert .
\end{eqnarray}
The above result, together with the definition \eqref{l1norm}, leads to
\be
(1+ 2\lambda) \Vert \phi^{n+1} \Vert_1 \le e^{L\Delta t} \Vert \phi^{n} \Vert_1 + \lambda \Vert \phi^{n+1}\Vert_1  + \lambda \Vert \phi^{n+1}\Vert_1 .
\ee
which leads to the desired estimate \eqref{3e0a}.

\section{Energy Stability}
\label{sect3}

A numerical correspondence of the energy definition (\ref{energy}) is given below
\be \label{dis-energy}
E_h (\phi^n) = \frac{\epsilon^2}2 \sum^J_{j=1} (\Del_+ \phi^n_j)^2 \Delta x
+ \sum^J_{j=1} F(\phi^n_j) \Delta x.
\ee

\begin{theorem} \label{thm3}
Consider the semi-implicit scheme \eqref{semi1} with periodic boundary conditions.
If the conditions \eqref{2e1} and \eqref{2e2} in Theorem \ref{thm1} are satisfied, 
then the numerical solutions of \eqref{semi1} satisfy
\be \label{energy0}
 E_h(\phi^{n+1}) \le E_h (\phi^n).
 \ee
 \end{theorem}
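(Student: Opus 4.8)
The plan is to follow the classical energy-decay argument for semi-implicit (IMEX) discretizations: test the scheme against $\phi^{n+1}-\phi^n$, sum over $j$, and control the resulting terms using discrete summation by parts (periodicity eliminates boundary terms) together with a Taylor expansion of $F$ and the time-step restriction \eqref{2e2}. Concretely, I would multiply \eqref{semi1} by $(\phi^{n+1}_j-\phi^n_j)\,\Delta x$ and sum over $j=1,\dots,J$. The left-hand side gives $\tfrac{1}{\Delta t}\sum_j (\phi^{n+1}_j-\phi^n_j)^2\Delta x \ge 0$. For the diffusion term, discrete summation by parts under periodic boundary conditions yields
\[
\varepsilon^2 \sum_j (\Del_+\Del_-\phi^{n+1}_j)(\phi^{n+1}_j-\phi^n_j)\Delta x
= -\varepsilon^2 \sum_j (\Del_+\phi^{n+1}_j)\bigl(\Del_+\phi^{n+1}_j-\Del_+\phi^n_j\bigr)\Delta x,
\]
and then the algebraic identity $a(a-b) = \tfrac12 a^2 - \tfrac12 b^2 + \tfrac12 (a-b)^2$ with $a=\Del_+\phi^{n+1}_j$, $b=\Del_+\phi^n_j$ converts this into $-\tfrac{\varepsilon^2}{2}\sum_j (\Del_+\phi^{n+1}_j)^2\Delta x + \tfrac{\varepsilon^2}{2}\sum_j (\Del_+\phi^n_j)^2\Delta x - \tfrac{\varepsilon^2}{2}\sum_j (\Del_+\phi^{n+1}_j-\Del_+\phi^n_j)^2\Delta x$, i.e.\ exactly the difference of the gradient parts of $E_h$ minus a nonnegative remainder.

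The main obstacle is the explicit nonlinear term $-f(\phi^n_j)(\phi^{n+1}_j-\phi^n_j)$, since $f$ is evaluated at the old time level while the energy involves $F$. The fix is a second-order Taylor expansion with exact remainder: for each $j$ there is $\xi^n_j$ between $\phi^n_j$ and $\phi^{n+1}_j$ with
\[
F(\phi^{n+1}_j) = F(\phi^n_j) + f(\phi^n_j)(\phi^{n+1}_j-\phi^n_j) + \tfrac12 f'(\xi^n_j)(\phi^{n+1}_j-\phi^n_j)^2.
\]
Here is where Theorem \ref{thm1} is essential: since \eqref{2e1} and \eqref{2e2} hold, the maximum principle guarantees $\gamma_- \le \phi^n_j,\phi^{n+1}_j \le \gamma_+$, hence $\xi^n_j \in [\gamma_-,\gamma_+]$ and $f'(\xi^n_j) \le \max_{\gamma_-\le u\le\gamma_+} f'(u) \le 1/\Delta t$ by \eqref{2e2}. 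Therefore $-f(\phi^n_j)(\phi^{n+1}_j-\phi^n_j) = F(\phi^n_j) - F(\phi^{n+1}_j) + \tfrac12 f'(\xi^n_j)(\phi^{n+1}_j-\phi^n_j)^2$, and the quadratic remainder is bounded by $\tfrac{1}{2\Delta t}(\phi^{n+1}_j-\phi^n_j)^2$.

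Assembling the three pieces: the identity from the scheme reads
\[
\frac{1}{\Delta t}\sum_j (\phi^{n+1}_j-\phi^n_j)^2\Delta x
= -\frac{\varepsilon^2}{2}\sum_j (\Del_+\phi^{n+1}_j)^2\Delta x + \frac{\varepsilon^2}{2}\sum_j (\Del_+\phi^n_j)^2\Delta x
- \frac{\varepsilon^2}{2}\sum_j (\Del_+\phi^{n+1}_j-\Del_+\phi^n_j)^2\Delta x
+ \sum_j\Bigl(F(\phi^n_j)-F(\phi^{n+1}_j)\Bigr)\Delta x + \frac12\sum_j f'(\xi^n_j)(\phi^{n+1}_j-\phi^n_j)^2\Delta x,
\]
which rearranges to
\[
E_h(\phi^{n+1}) - E_h(\phi^n)
= -\frac{\varepsilon^2}{2}\sum_j (\Del_+\phi^{n+1}_j-\Del_+\phi^n_j)^2\Delta x
- \sum_j\Bigl(\frac{1}{\Delta t} - \frac12 f'(\xi^n_j)\Bigr)(\phi^{n+1}_j-\phi^n_j)^2\Delta x.
\]
The first sum on the right is manifestly $\le 0$; the coefficient $\tfrac{1}{\Delta t} - \tfrac12 f'(\xi^n_j) \ge \tfrac{1}{\Delta t} - \tfrac{1}{2\Delta t} = \tfrac{1}{2\Delta t} > 0$ by the bound on $f'(\xi^n_j)$, so the second sum is also $\le 0$. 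Hence $E_h(\phi^{n+1}) \le E_h(\phi^n)$, which is \eqref{energy0}. The only subtlety to double-check is the discrete summation-by-parts bookkeeping with the $\Del_+/\Del_-$ pair under periodicity, which is routine.
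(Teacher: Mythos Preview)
Your proposal is correct and follows essentially the same route as the paper: both test the scheme against $\phi^{n+1}-\phi^n$, use summation by parts with periodicity for the diffusion term, the identity $a(a-b)=\tfrac12 a^2-\tfrac12 b^2+\tfrac12(a-b)^2$, a second-order Taylor expansion of $F$, and the maximum principle together with \eqref{2e2} to control $f'(\xi^n_j)$. The only cosmetic difference is that the paper starts from $E_h(\phi^{n+1})-E_h(\phi^n)$ and drops the nonnegative term $\tfrac{\varepsilon^2}{2}\sum_j(\Del_+(\phi^{n+1}_j-\phi^n_j))^2\Delta x$ to obtain an inequality before invoking the scheme, whereas you start from the scheme and retain that term, arriving at an exact identity for the energy difference; the conclusion is of course the same.
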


\begin{proof}
It follows from the definition of the discrete energy \eqref{dis-energy} that
\bean
&& E_h(\phi^{n+1}) - E_h (\phi^n) \nonumber \\
&=& \frac{\epsilon^2}2 \sum^J_{j=1} (\Del_+ \phi^{n+1}_j+ \Del_+ \phi^n_j)
 (\Del_+ \phi^{n+1}_j- \Del_+ \phi^n_j)\Delta x
+ \sum^J_{j=1} \left( F(\phi^{n+1}_j)-F(\phi^n_j)\right) \Delta x \nonumber \\
&=:& \epsilon^2 I_1 + I_2. \label{i12}
\eean
We now estimate $I_1$ and $I_2$. Note that
\bean
I_1 &=& \frac 12 \sum^J_{j=1} \Del_+ (\phi^{n+1}_j+\phi^n_j)
 \Del_+ (\phi^{n+1}_j- \phi^n_j)\Delta x \nonumber \\
 &=& \sum^J_{j=1} \Del_+ \phi^{n+1}_j \Del_+ (\phi^{n+1}_j- \phi^n_j)\Delta x - \frac 12 \sum^J_{j=1} \left( \Del_+ (\phi^{n+1}_j- \phi^n_j)\right) ^2 \Delta x \nonumber \\
 &\le& \sum^J_{j=1} \Del_+ \phi^{n+1}_j \Del_+ (\phi^{n+1}_j- \phi^n_j)\Delta x\nonumber \\
&=& - \sum^J_{j=1} \Del_+ \Del_- \phi^{n+1}_j (\phi^{n+1}_j- \phi^n_j)\Delta x
\label{i1}
\eean
where in the last step we have used the discrete integration by parts. Furthermore, using Taylor expansion gives
\be
F(\phi^{n+1}_j)= F(\phi^n_j)+ f(\phi^n_j) (\phi^{n+1}_j-\phi^n_j) + \frac 12 f'(\theta^n_j) (\phi^{n+1}_j-\phi^n_j)^2,
\ee
where $\gamma_- \le  \theta_j  ^n \le \gamma_+$, which yields
\be \label{i2}
I_2 = \sum^J_{j=1} \Big( f(\phi^n_j) (\phi^{n+1}_j-\phi^n_j) + \frac 12 f'(\theta^n_j) (\phi^{n+1}_j-\phi^n_j)^2\Big) \Delta x.
\ee
It follows from the \eqref{i12}, \eqref{i1} and \eqref{i2} that
\bean
&& E_h(\phi^{n+1}) - E_h (\phi^n)  \nonumber \\
&\le& \sum^J_{j=1} \Big[ -\epsilon ^2 \Del_+ \Del_ - \phi^{n+1}_j (\phi^{n+1}_j- \phi^n_j)+ f(\phi^n_j) (\phi^{n+1}_j-\phi^n_j) \Big] \Delta x \nonumber \\
&& \quad + \sum^J_{j=1} \frac 12 f'(\theta^n_j) (\phi^{n+1}_j-\phi^n_j)^2 \Delta x.
\eean
It follows from the semi-implicit scheme \eqref{semi1} that
\[
\frac{(\phi_j^{n+1}- \phi_j^n)^2}{\Delta t}
=  \varepsilon^2\Del_+ \Del_- \phi^{n+1}_j(\phi_j^{n+1}- \phi_j^n) -  f(\phi^n_j)(\phi_j^{n+1}- \phi_j^n),
\]
Combining the above two results gives
\bean
&& E_h(\phi^{n+1}) - E_h (\phi^n)  \nonumber \\
&\le& \sum^J_{j=1} \frac{\Delta x} {\Delta t} \left(-1 + \frac 12 \Delta t f'(\theta^n_j) \right)
(\phi^{n+1}_j-\phi^n_j)^2 \le 0,
\eean
where in the last step we have used the assumption \eqref{2e2} and the maximum principle result \eqref{max1}.
\end{proof}

\section{Concluding Remarks}

This paper extends the result of \cite{tang2016implicit} which provides similar maximum principle and energy stability results for the polynomial double well potential $F$. In \cite{tang2016implicit}, numerical experiments also demonstrated that the semi-implicit (sometimes also called implicit-explicit) method is an effective scheme for solving the Allen-Cahn equations.

It is demonstrated in \cite{chenshen98} that semi-implicit scheme is very effective for the Ginzburg-Landau equation and the Cahn-Hilliard equation. We may conjecture that the energy stability \eqref{energy0} also holds for the semi-implicit solutions of the Cahn-Hilliard equation. However, standard energy analysis used in this work may not be sufficient to establish such a result. It is expected some deeper analysis is needed to verify the conjecture.

We point out that it is possible to extend the present framework to handle the Cahn-Hilliard equation with a logarithmic free energy. The key ingredient for establishing the energy stability with the present framework is the boundedness of numerical solutions. The solution boundedness is indeed true for the Cahn-Hilliard equation with a logarithmic free energy  \cite{Cherfils11}; however, there has very few theoretical justification of the numerical counterpart. It is worth mentioning two relevant works in this direction. One is the work of Copetti and Elliott \cite{elliott92} who analyzed the implicit Euler scheme and obtained the uniform maximum bound of the numerical solutions; another one is due to Chen et al. \cite{chenwb19} who studied the first-order and second-order partially implicit scheme and obtained the $L^\infty$-bound of the numerical solutions. Of course, more interesting and challenging issue is to analyze the energy stability of semi-implicit schemes for the Cahn-Hilliard equation with a logarithmic free energy.


\end{document}